\documentclass{amsart}
\usepackage{graphicx,xcolor}

\title{Generalized Hamming weights of codes arising from complete intersections}

\author[E. Camps Moreno]{Eduardo Camps Moreno}
\author[F. Salizzoni]{Flavio Salizzoni}
\author[R. San-José]{Rodrigo San-José} 
\address[Eduardo Camps Moreno]{Université de Bordeaux, Talence, France}
\email{eduardo.camps-moreno@math.u-bordeaux.fr}
\address[Flavio Salizzoni]{Max Planck Institute for Mathematics in the Sciences, Leipzig, Germany}
\email{flavio.salizzoni@mis.mpg.de}
\address[Rodrigo San-José]{Department of Mathematics\\ Virginia Tech\\ Blacksburg, VA USA}
\email{rsanjose@vt.edu}

\thanks{The second author was supported by the P500PT-222344 SNSF project. The third author was partially supported by the NSF grant DMS-2401558, the Commonwealth Cyber Initiative, an AMS-Simons Travel Grant, and by Grant PID2022-138906NB-C21 funded by MICIU/AEI/10.13039/501100011033 and by ERDF/EU}

\usepackage{geometry}
\usepackage{amssymb,amsthm,latexsym,amsmath,amscd,graphicx,url,xcolor,comment,cite}
\usepackage[mathscr]{euscript}
\usepackage[hidelinks]{hyperref}

\theoremstyle{plain}
\newtheorem{theorem}{Theorem}[section]
\newtheorem{proposition}[theorem]{Proposition}
\newtheorem{conjecture}[theorem]{Conjecture}

\newtheorem{remark}[theorem]{Remark}
\newtheorem{corollary}[theorem]{Corollary}

\newcommand{\F}{\mathbb F}

\newcommand{\Cc}{\mathcal C}

\definecolor{violet}{rgb}{0.5, 0.0, 1.0}

\begin{document}
\begin{abstract}
We provide a positive answer to a conjecture proposed by Tohǎneanu and Van Tuyl regarding the minimum distance of codes whose underlying set of points is a reduced complete intersection. Despite the technical nature of the conjecture, we show that it follows directly from a not-well-known refinement of the classical Bézout bound for overdetermined polynomial systems. For completeness, this paper presents a self-contained proof of this refined bound. Furthermore, we show that using the same approach, it is possible to obtain a bound on the generalized Hamming weights of such a code and, more generally, to control the minimum distance of the codes obtained by evaluating forms of degree $d$ on the points of a zero-dimensional complete intersection.
\end{abstract}
\maketitle
\section{Introduction}
Let $q$ be a prime power, and $\F_q$ the finite field with $q$ elements. A linear code $\Cc$ over $\F_q$ is a linear subspace of $\F_q^n$, for some $n\geq 1$. We say that $\Cc$ is an $[n,k]$ code if $C\subset \F_q^n$ and $\dim_{\F_q}C=k$. For any subcode $\mathcal{D}\subseteq\Cc$, the support $\mathrm{supp}(\mathcal{D})$ of $\mathcal{D}$ is the set of coordinate positions where at least one codeword in $\mathcal{D}$ has a non-zero entry, that is
$$\mathrm{supp}(\mathcal{D})=\{i:\exists c\in\mathcal{D}\text{ such that }c_i\neq0\}.$$
The $r$-th generalized Hamming weight of $\Cc$ is defined as
\begin{equation*}
    \mathrm{d}_r(\Cc)=\min\{\lvert\mathrm{supp}(\mathcal{D})\rvert:\mathcal{D}\text{ is a subcode of }\Cc\text{ such that }\dim_{\F_q}\mathcal{D}=r\}.
\end{equation*}
Generalized Hamming weights were introduced in 1991 by Wei~\cite{weiGHW} and have since been extensively studied. While they are well-understood for several families of codes (see, for example, \cite{pellikaanGHWRM,beelenGHWcartesian,munueraGHWhermitica,sanjoseGHWNT}), determining these weights for codes arising from higher-dimensional algebraic varieties remains a significant challenge. The case of $r=1$ is particularly important, and it corresponds to the so-called minimum distance of the code.

A generator matrix $G\subset \F_q^{k\times n}$ for a linear code $\Cc$ is a matrix whose rows form a basis for $\Cc$. We can consider the multiset $\Pi_G$ of the columns of $G$, viewed as projective points. This is natural since choosing a different representative for a point would correspond to multiplying a column by a nonzero scalar, which does not change the generalized Hamming weights of the linear code generated by the resulting generator matrix.  
In this setting, Van Tuyl and Tohǎneanu proposed the following conjecture concerning the minimum distance of a projective code whose associated point set $X = \Pi_G$ is a complete intersection; see also~\cite[Conjecture 4.29]{tohuaneanu2024commutative}.
\begin{conjecture}[\hspace{1pt}{\cite[Conjecture 4.9]{tohaneanu2013bounding}}]\label{conj}
 Let $\Cc\subseteq\F_q^n$ be a $k$-dimensional code with associated set of points $X$. If $X$ is a complete intersection with degrees $d_1,\dots,d_{k-1}$ such that $2\leq d_1\leq \dots\leq d_{k-1}$, then $\mathrm{d}_1(\Cc)\geq (d_1-1)d_2\cdots d_{k-1}$.
\end{conjecture}
Prior to this work, only partial cases of this conjecture had been established. The authors of the conjecture themselves proved it in $\mathbb{P}^2$~\cite[Theorem 4.10]{tohaneanu2013bounding}, as well as under a further geometric condition in~\cite[Corollary 4.8]{tohaneanu2013bounding}. Later, it was proven in~\cite[Corollary 4.2]{martinez2018minimum} for complete intersections whose initial ideal is also a complete intersection and in~\cite[Proposition 6.4]{cooper2020generalized} for the case $d_1=\cdots=d_{k-1}$. 

In this paper, we prove the conjecture. In fact, we study evaluation codes constructed by evaluating homogeneous forms at the points of a complete zero-dimensional intersection $X\subseteq\mathbb{P}(\bar\F_q)$ and we establish a new lower bound for the minimum distance of these codes for an arbitrary degree $d$, which yields the conjectured bound as a direct corollary when specialized to $d=1$. Furthermore, for the case of linear forms ($d=1$), we provide a lower bound for the $r$-th generalized Hamming weight in terms of the degrees $d_1,\dots,d_{k-1}$ of the complete intersection. Our results are built upon a refinement of the generalized Bézout bound for overdetermined systems.
\section{Bound for complete intersection codes}
We begin this section by proving a generalized Bézout bound for overdetermined systems, which serves as a key tool to derive bounds on the generalized Hamming weights subsequently. Although a foundational version of this bound appears in the work of Masser and Wüstholz~\cite[Theorem II, Chapter 2]{masser1983fields}, their analysis is restricted to the characteristic zero setting and to $p$-adic numbers. By adapting their approach to address positive characteristic and incorporating a technical refinement, we obtain a bound that is strictly sharper. Given that we could not find this refined statement explicitly stated in the literature, we also provide a direct, self-contained proof here accessible to readers with only a minimal background in algebraic geometry. 
Throughout, $\mathbb{K}$ denotes an algebraically closed field. For homogeneous polynomials $f_1,\dots,f_t \in \mathbb{K}[x_1,\dots,x_k]$ of positive degree, $V(f_1,\dots,f_t) \subseteq \mathbb{P}^{k-1}(\mathbb{K})$ denotes their common zero locus, and all dimensions and codimensions are the projective ones, so that $\dim V(J) = \dim \mathbb{K}[x_1,\dots,x_k]/J - 1$ for a homogeneous ideal $J \subseteq \mathbb{K}[x_1,\dots,x_k]$. 

We recall that if $f_1,\dots,f_t$, with $t < k$, is a regular sequence of homogeneous polynomials of positive degree, then $V(f_1,\dots,f_t)$ is nonempty of codimension $t$. A complete intersection of degrees $d_1,\dots,d_t$ is the subscheme of $\mathbb{P}^{k-1}(\mathbb{K})$ defined by a regular sequence of forms of degrees $d_1,\dots,d_t$. It is called reduced if the ideal generated by the regular sequence is radical, that is, if it coincides with the vanishing ideal of its zero locus. In particular, a zero-dimensional reduced complete intersection of degrees $d_1,\dots,d_{k-1}$ consists of exactly $d_1 \cdots d_{k-1}$ distinct points, by B\'ezout's theorem.
We begin by showing that the ideal generated by a system of homogeneous polynomials always contains a regular sequence of prescribed length and degrees.
\begin{proposition}
    Let $f_1,...,f_m$ be homogeneous polynomials of degree $0<d_1\leq\dots\leq d_m$ in $k$ variables such that the locus of common zeros has dimension $r$. Let $s\leq k-r-1$ and suppose that $f_1,\dots,f_s$ form a regular sequence. Then, in the homogeneous ideal $(f_1,...,f_m)$ there exists a regular sequence of homogeneous polynomials of length $k-r-1$ with degrees $d_1,\dots,d_s, d_{m-k+r+s+2},\dots,d_m$.
\end{proposition}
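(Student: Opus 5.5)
We extend the regular sequence $f_1,\dots,f_s$ one element at a time by \emph{generic} linear combinations, using prime avoidance together with the infinitude of $\mathbb{K}$. Set $c=k-r-1$, the codimension of $V(f_1,\dots,f_m)$, and let $I=(f_1,\dots,f_m)$. The target degree list is $d_1,\dots,d_s,d_{m-c+s+1},\dots,d_m$. So at step $j=s+1,\dots,c$ we want a homogeneous element $g_j\in I$ of degree $e_j:=d_{m-c+j}$ such that $f_1,\dots,f_s,g_{s+1},\dots,g_j$ is regular. Note that the $e_j$ are nondecreasing and all at least $d_s$.

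\textbf{Inductive step.} Suppose $h_1,\dots,h_{j-1}$ (with $h_i=f_i$ for $i\le s$ and $h_i=g_i$ afterwards) is a regular sequence in $I$ with $j-1<c$. Since $\mathbb{K}[x_1,\dots,x_k]$ is Cohen–Macaulay, $J=(h_1,\dots,h_{j-1})$ is unmixed, and every associated prime $P$ of $J$ has height $j-1$. Each such $P$ corresponds to a component of $V(J)$ of codimension $j-1<c$. Hence $P\not\supseteq I$, because otherwise $V(I)$ would contain a component of codimension $j-1$, contradicting $\operatorname{codim}V(I)=c$. Consider the polynomials
$$f_\ell\cdot \lambda_\ell(x),\quad \ell=1,\dots,m-c+j,\quad \deg\lambda_\ell=e_j-d_\ell\ge 0 .$$
These are defined because $d_\ell\le d_{m-c+j}=e_j$ for all $\ell\le m-c+j$. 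Their span is a finite-dimensional $\mathbb{K}$-space $W_j\subseteq I_{e_j}$.

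\textbf{Key claim.} We claim $W_j\not\subseteq P$ for each associated prime $P$ of $J$. Granting this, each $W_j\cap P$ is a proper subspace, and since $\mathbb{K}$ is infinite a finite union of proper subspaces cannot cover $W_j$. So we can pick $g_j\in W_j$ avoiding all associated primes, i.e.\ a nonzerodivisor modulo $J$, and the induction proceeds.

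\textbf{Main obstacle: proving the key claim.} We need that $P$ does not contain all of $f_1,\dots,f_{m-c+j}$; multiplying by a suitable power of a variable not in $P$ then gives an element of degree exactly $e_j$ outside $P$. Here $P$ cannot contain every variable, since it has height $j-1<k$. The point is that we are only allowed the $m-c+j$ \emph{smallest-degree} generators, and this is where the counting enters. Suppose $P\supseteq (f_1,\dots,f_{m-c+j})$. Adding the remaining $c-j$ generators $f_{m-c+j+1},\dots,f_m$ cuts the variety $V(P)$, which is irreducible of codimension $j-1$, down by at most $c-j$ in codimension (Krull's principal ideal theorem applied in $\mathbb{K}[x]/P$). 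Thus $V(I)$ would have a component of codimension at most $(j-1)+(c-j)=c-1$ inside $V(P)$, provided that intersection is nonempty. Nonemptiness is automatic projectively, because the dimension $k-1-(c-1)\ge 1>0$. This contradicts $\operatorname{codim}V(I)=c$, so the key claim holds.

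It remains to check the base case $j=s+1$, where $J=(f_1,\dots,f_s)$ is the given regular sequence, and that the bookkeeping of indices yields exactly the degrees $d_{m-k+r+s+2},\dots,d_m$. Both follow from the argument above with $c=k-r-1$.
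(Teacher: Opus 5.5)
Your proof is correct and takes essentially the same approach as the paper. At step $j$ you pad the $m-c+j$ lowest-degree generators by powers of a variable up to degree $d_{m-c+j}$. You then use Krull's height theorem to show these generators cannot all vanish on a component (equivalently, lie in an associated prime) of the current complete intersection, and you conclude by prime avoidance over the infinite field $\mathbb{K}$. The only cosmetic difference is that you do the dimension count prime by prime, by contradiction, and invoke unmixedness explicitly to get regularity; the paper instead first bounds $\dim V(f_1,\dots,f_{m-t+1})\le r+t-1$ globally and phrases regularity as the drop in dimension.
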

\begin{proof}
    For each $t\in[m]$, we denote by $J_t$ the ideal generated by $f_1,\dots, f_{m-t+1}$ and assume that $\dim(V(J_1))=r$. Since $V(J_1)$ is set-wise equal to $V(J_t)\cap V((f_{m-t+2},\dots,f_{m}))$, by Krull's height theorem, we get that
    $$\dim(V(J_t))\leq r+t-1.$$
    We construct by recursion a regular sequence of homogeneous polynomials $p_1,\dots, p_{k-r-1}$ with degrees $d_1,\dots,d_s, d_{m-k+r+s+2},\dots,d_m$ such that $p_i\in J_{k-r-i}$ for all $i\in[k-r-1]$. For $i\in[s]$ we set $p_i=f_i$. Assume now that we have a regular sequence $p_1,\dots,p_{\ell-1}$ satisfying the constraints above. In particular, the variety $V$ cut out by $p_1,\dots,p_{\ell-1}$ has codimension exactly $\ell-1$. Therefore, it is a complete intersection and every irreducible component of $V$ has dimension  $k-\ell$ by Krull's height theorem. We denote by $(J_{k-r-\ell})_{d_{m-k+r+\ell+1}}$ the vector space of homogeneous polynomials of degree $d_{m-k+r+\ell+1}$ in the ideal $J_{k-r-\ell}$. 
    
    Since $\dim(V(J_{k-r-\ell}))\leq k-\ell-1$ and $\dim(X)=k-\ell$ for any irreducible component $X$ of $V$, there exists a generator $f_u$ of $J_{k-r-\ell}$ that does not vanish on $X$. Recall that $J_{k-r-\ell}$ is generated by $f_1,\dots,f_{m-k+r+\ell+1}$, and we have $u\leq m-k+r+\ell+1$, i.e., $\deg(f_u)\leq d_{ m-k+r+\ell+1}$. Also note that $X$ not being empty implies that there exists a variable $x_j$ that does not vanish on $X$. Therefore, we have that $g=x_j^{d_{m-k+r+\ell+1}-\deg(f_u)}f_u\in(J_{k-r-\ell})_{d_{m-k+r+\ell+1}}$ is not identically zero on $X$ because $X$ is irreducible. Since $(J_{k-r-\ell})_{d_{m-k+r+\ell+1}}$ is a vector space over an infinite field and there is only a finite number of irreducible components of $V$, by prime avoidance there exists an element $p_\ell\in(J_{k-r-\ell})_{d_{m-k+r+\ell+1}}$ that does not vanish on any irreducible component of $V$, i.e., $\dim(V(p_1,\dots,p_\ell))=k-\ell-1$, which completes the recursion.
\end{proof}
The classical Bézout Theorem states that the number of points of $\mathbb{P}^{k-1}(\mathbb{K})$ contained in the intersection of $k-1$ hypersurfaces is either infinite or is bounded by the product of the degrees of the polynomials. This, together with the previous proposition, implies the following corollary.
\begin{corollary}[Generalized Bézout Theorem for overdetermined systems]\label{c:genBezout}
    Let $f_1,...,f_m$ be homogeneous polynomials of degree $d_1\leq\dots\leq d_m$ in $k$ variables such that the variety $V(f_1,\dots,f_m)$ has dimension $0$. Then,
    $$\lvert V(f_1,\dots,f_m)\rvert\leq d_1d_{m-k+3}\cdots d_m.$$
    Moreover, if $f_1,\dots,f_s$ form a regular sequence, then
    $$\lvert V(f_1,\dots,f_m)\rvert\leq d_1\cdots d_sd_{m-k+s+2}\cdots d_m.$$
\end{corollary}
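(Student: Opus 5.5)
Apply the preceding Proposition with $r=0$, then Bézout. The variety $V(f_1,\dots,f_m)$ is zero-dimensional, so the Proposition gives, inside the ideal $(f_1,\dots,f_m)$, a regular sequence $p_1,\dots,p_{k-1}$ of homogeneous polynomials. Its degrees are $d_1,\dots,d_s,d_{m-k+s+2},\dots,d_m$; this list has $s+(k-1-s)=k-1$ entries, as it should.

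Every $p_i$ lies in the ideal generated by the $f_j$, so $V(f_1,\dots,f_m)\subseteq V(p_1,\dots,p_{k-1})$. Since $p_1,\dots,p_{k-1}$ is a regular sequence of length $k-1$ in $k$ variables, $V(p_1,\dots,p_{k-1})$ has codimension $k-1$, i.e.\ it is a finite set of points in $\mathbb{P}^{k-1}(\mathbb{K})$. The classical Bézout theorem bounds its cardinality by the product of the degrees, which gives
$$\lvert V(f_1,\dots,f_m)\rvert\leq \lvert V(p_1,\dots,p_{k-1})\rvert\leq d_1\cdots d_s\,d_{m-k+s+2}\cdots d_m.$$
This is the second assertion.

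For the first assertion, take $s=1$. A single nonzero homogeneous polynomial of positive degree is always a regular sequence, since the polynomial ring is a domain. The hypothesis $s\le k-r-1=k-1$ holds whenever $k\ge 2$; for $k=1$ the projective space is empty and the statement is vacuous. The Proposition then yields degrees $d_1,d_{m-k+3},\dots,d_m$, hence $\lvert V\rvert\le d_1d_{m-k+3}\cdots d_m$.

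The only point needing care is the use of Bézout. I would take it in the form "the number of isolated points of an intersection of $k-1$ hypersurfaces is at most the product of the degrees", or equivalently the degree of the complete intersection scheme counted with multiplicity. Since each point appears with multiplicity at least one, the number of distinct points is bounded by that degree. One should also check the degenerate case where some $d_i=0$, which would make $f_i$ a nonzero constant and the variety empty; this is excluded because the statement assumes positive degrees. No other obstacle is expected, since the substantive work is done by the Proposition.
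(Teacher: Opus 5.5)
Your proposal is correct and matches the paper's argument: apply the Proposition with $r=0$ to get a regular sequence $p_1,\dots,p_{k-1}\in(f_1,\dots,f_m)$ of degrees $d_1,\dots,d_s,d_{m-k+s+2},\dots,d_m$, use $V(f_1,\dots,f_m)\subseteq V(p_1,\dots,p_{k-1})$, bound the latter by classical Bézout, and take $s=1$ for the first assertion. One small slip: for $k=1$ the space $\mathbb{P}^0$ is a point, not empty, but the statement is still vacuous because a form of positive degree in one variable has no zero there (likewise $s\le k-1$ is automatic in the second assertion, since a regular sequence of length $k$ would force $V(f_1,\dots,f_m)=\emptyset$).
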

We briefly recall the construction of projective evaluation codes. Let $\mathbb{F}_q[x_1,\dots,x_k]_d$ denote the vector space of homogeneous polynomials of degree $d$ over $\mathbb{F}_q$, together with the zero polynomial. Let $X= \{P_1,\dots,P_n\} \subseteq \mathbb{P}^{k-1}(\mathbb{F}_q)$ be a set of projective points, and fix a representative $v_i \in \mathbb{F}_q^k$ for each $P_i$. The projective evaluation code $\Cc_d(X)$ is defined as the image of the evaluation map $$\mathrm{ev} : \mathbb{F}_q[x_1,\dots,x_k]_d \longrightarrow \mathbb{F}_q^n, \quad f \longmapsto(f(v_1),\dots,f(v_n)).$$ A different choice of representatives rescales each coordinate of the code by a nonzero scalar, and hence yields a monomially equivalent code. In particular, the dimension and the generalized Hamming weights of $\Cc_d(X)$ do not depend on this choice, and with a slight abuse of language, we speak of evaluating at the points of $X$. Since a homogeneous polynomial vanishes at $v_i$ if and only if it vanishes at every representative of $P_i$, the minimum distance of $\Cc_d(X)$ is given by
\begin{equation}\label{eq:1}
    \mathrm{d}_1(\Cc_d(X))=n-\max\{\lvert X\cap V(f)\rvert\neq n:f\in \mathbb{F}_q[x_1, \dots, x_k]_d\}.
\end{equation}
Enlarging the field over which the forms are defined can only increase the maximum, so
$$\mathrm{d}_1(\Cc_d(X))\geq n-\max\{\lvert X\cap V(f)\rvert \neq n:f\in \bar\F_q[x_1, \dots, x_k]_d\}.$$
Moreover, the previous expression can be shown to be an equality, but the inequality is enough for our purposes. From this, by applying Corollary~\ref{c:genBezout}, we obtain the following bound.
\begin{theorem}\label{thm:mindisthigherform}
    Let $\Cc$ be a code obtained by evaluating the set of homogeneous forms of degree $d$ at the points of a zero-dimensional reduced complete intersection defined over $\F_q$ of degrees $2\leq d_1\leq d_2\leq\dots\leq d_{k-1}$. Then,
    $$\mathrm{d}_1(\Cc)\geq (d_1-\min\{d,d_1\})d_{2}\cdots d_{k-1}.$$
\end{theorem}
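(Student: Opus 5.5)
We combine \eqref{eq:1} (in its $\bar\F_q$ form) with Corollary~\ref{c:genBezout}. Let $X=V(g_1,\dots,g_{k-1})$, where $g_1,\dots,g_{k-1}$ is a regular sequence of forms over $\F_q$ of degrees $d_1\le\dots\le d_{k-1}$. Then $n=|X|=d_1\cdots d_{k-1}$ by Bézout. Fix a form $f\in\bar\F_q[x_1,\dots,x_k]_d$ with $|X\cap V(f)|\neq n$, so $f$ does not vanish on all of $X$. It suffices to show
$$|X\cap V(f)|\le \min\{d,d_1\}\,d_2\cdots d_{k-1},$$
since then $\mathrm{d}_1(\Cc)\ge n-\min\{d,d_1\}d_2\cdots d_{k-1}=(d_1-\min\{d,d_1\})d_2\cdots d_{k-1}$.

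The set $X\cap V(f)=V(g_1,\dots,g_{k-1},f)$ is a subset of the finite set $X$, so it is empty or zero-dimensional. If it is empty there is nothing to prove. Otherwise we apply Corollary~\ref{c:genBezout} to the $m=k$ forms $f,g_1,\dots,g_{k-1}$. Reorder their degrees increasingly as $e_1\le\dots\le e_k$. The first bound of the corollary gives
$$|V(f,g_1,\dots,g_{k-1})|\le e_1e_3\cdots e_k,$$
that is, the product of all degrees with the second smallest one removed. We then compute this product case by case.

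\begin{itemize}
\item If $d\le d_1$, the ordering is $d\le d_1\le d_2\le\dots$. Removing $e_2=d_1$ leaves $d\,d_2\cdots d_{k-1}$.
\item If $d\ge d_1$, the smallest degree is $e_1=d_1$. The second smallest is $e_2=\min\{d,d_2\}$, and removing it leaves $d_1$ times the product of the remaining degrees among $d,d_2,\dots,d_{k-1}$. Since this removes a smallest element of that list, the product is at most $d_1\,d_2\cdots d_{k-1}$.
\end{itemize}

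In both cases the bound is $\min\{d,d_1\}\,d_2\cdots d_{k-1}$, as required.

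The main obstacle is checking that the hypotheses of Corollary~\ref{c:genBezout} hold. The corollary needs positive degrees, which is fine since $d\ge1$, and zero-dimensionality, which is handled above by treating the empty case separately. The reducedness of the complete intersection is used only to get $n=d_1\cdots d_{k-1}$ distinct points.

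If the unrefined first bound seems insufficient in the case $d\ge d_1$, we would instead use the refined form with the regular sequence $g_1,\dots,g_{k-2}$ placed first ($s=k-2$). That gives the bound $d_1\cdots d_{k-2}\cdot\max\{d,d_{k-1}\}$, which is not the right one. So the ordering-based first bound is the correct tool, together with the observation that deleting the second-smallest degree is optimal.
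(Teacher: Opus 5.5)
Your treatment of the case $d\le d_1$ is the paper's argument. You apply the first bound of Corollary~\ref{c:genBezout} to the $k$ forms $f,g_1,\dots,g_{k-1}$, handle the empty intersection separately, and deleting the second-smallest degree $d_1$ leaves $d\,d_2\cdots d_{k-1}$.

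The case $d\ge d_1$, however, rests on a false inequality. Deleting the smallest element of the list $d,d_2,\dots,d_{k-1}$ \emph{maximizes} the product of the remaining elements; it does not minimize it. So $d_1$ times that product is at least $d_1d_2\cdots d_{k-1}$, with strict inequality as soon as $d>d_2$. For instance, with $k=3$, $d_1=2$, $d_2=3$ and $d=5$, the corollary only gives $|X\cap V(f)|\le 2\cdot 5=10$, which exceeds $n=6$. Your closing paragraph, which tries the refined bound with $s=k-2$, does not repair this. Neither bound of the corollary yields $n$ in general here, because the refined version needs the regular sequence to occupy the lowest degrees.

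The fix is immediate and needs no B\'ezout-type bound. When $d\ge d_1$, the target $\min\{d,d_1\}\,d_2\cdots d_{k-1}$ equals $n$. The inequality $|X\cap V(f)|\le n$ holds trivially because $X\cap V(f)\subseteq X$; equivalently, the claimed bound is just $\mathrm{d}_1(\Cc)\ge 0$. The paper's remark after the theorem calls this case trivial for exactly this reason. With that one-line replacement for your second bullet, your proof is complete and matches the paper's.
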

\begin{proof}
    Since the set of evaluation points forms a reduced complete intersection, we have $n=d_1d_2\cdots d_{k-1}$. Moreover, let $f_1,\dots, f_{k-1}$ be the polynomials that define the complete intersection. Then, for every homogeneous form $f$ of degree $d$, either the system $f,f_1,\dots,f_{k-1}$ has no solutions or, if its zero locus is a zero-dimensional variety, it follows from Corollary \ref{c:genBezout}, that its cardinality is bounded from above by $\min\{d,d_1\}d_{2}\cdots d_{k-1}$. This, together with the discussion preceding the theorem, completes the proof.
\end{proof}
\begin{remark}
    When $d\geq d_1$, the bound is trivial. However, when $d<d_1$, it is significantly better than the one obtained by Gold, Little, and Schenck in~\cite[Theorem 3.2]{gold2005cayley}.
\end{remark}

A code whose dual has no codewords of weight $1$ or $2$ is known as a projective code. It is well known that every $[n,k]$ linear projective code can be realized as a projective evaluation code, where the codewords are generated by evaluating forms of degree $1$ on the set of columns of a generator matrix. From this perspective, Equation~\eqref{eq:1} can be extended to generalized Hamming weights as
$$\mathrm{d}_r(\Cc)=n-\text{hyp}_r(\Cc),$$
where $\text{hyp}_r(\Cc)$ denotes the largest number of columns of $G$ contained in a $k-r$ dimensional vector space. Notice that $\text{hyp}_r(\Cc)$ does not depend on the choice of the generator matrix $G$. We refer to~\cite[Chapters 4 and 5]{tohuaneanu2024commutative} for more details. In this context, we are able to extend the statement of Theorem~\ref{thm:mindisthigherform} to generalized Hamming weights. We remark that Conjecture~\ref{conj} follows from this result by setting $r=1$. Also note that whenever $X$ is reduced, the code $\Cc$ does not have two linearly dependent columns (i.e., the code is nondegenerate), and then $\mathrm{d}_k(\Cc)=n=d_1\cdots d_{k-1}$.
\begin{theorem}\label{thm:generalizedweightshyper}
    Let $\Cc$ be a $k$-dimensional linear code obtained by evaluating the set of homogeneous linear forms at the points of a zero-dimensional reduced complete intersection defined over $\F_q$ of degrees $2\leq d_1\leq d_2\leq\dots\leq d_{k-1}$. Then, for $1\leq r \leq k-1$, we have
    $$\mathrm{d}_r(\Cc)\geq (d_1\cdots d_r-1)d_{r+1}\cdots d_{k-1}.$$
\end{theorem}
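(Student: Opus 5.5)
We use the identity $\mathrm{d}_r(\Cc)=n-\mathrm{hyp}_r(\Cc)$ with $n=d_1\cdots d_{k-1}$, since $X$ is reduced. It therefore suffices to show that any linear subspace of codimension $r$ (a projective linear subspace $L\subseteq\mathbb{P}^{k-1}$ of dimension $k-1-r$) contains at most $d_{r+1}\cdots d_{k-1}$ points of $X$. Then
$$\mathrm{d}_r(\Cc)\geq d_1\cdots d_{k-1}-d_{r+1}\cdots d_{k-1}=(d_1\cdots d_r-1)d_{r+1}\cdots d_{k-1}.$$
As in Theorem~\ref{thm:mindisthigherform}, passing to $\bar\F_q$ only enlarges the set of such $L$, so we may work over $\mathbb{K}=\bar\F_q$.

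Fix such an $L$, cut out by independent linear forms $\ell_1,\dots,\ell_r$, and let $f_1,\dots,f_{k-1}$ be the regular sequence of degrees $d_1\leq\dots\leq d_{k-1}$ defining $X$. Then $X\cap L=V(\ell_1,\dots,\ell_r,f_1,\dots,f_{k-1})$. This locus is contained in the finite set $X$, so it is either empty or zero-dimensional. The $r$ linear forms form a regular sequence and have the smallest degrees. We order the system as $\ell_1,\dots,\ell_r,f_1,\dots,f_{k-1}$, giving $m=k-1+r$ polynomials with degrees $1,\dots,1,d_1,\dots,d_{k-1}$, nondecreasing. Applying the second part of Corollary~\ref{c:genBezout} with $s=r$, the degrees that survive are the first $s$, namely $1^r$, and then those indexed from $m-k+s+2=2r+1$ to $m$. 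In the list $(1^r,d_1,\dots,d_{k-1})$, position $2r+1$ corresponds to $d_{r+1}$. Hence
$$\lvert X\cap L\rvert\leq 1\cdots 1\cdot d_{r+1}\cdots d_{k-1}.$$
This is the required bound. The case $r=k-1$ is consistent: a point contains at most one point of $X$, and the bound reads $\mathrm{d}_{k-1}\geq n-1$.

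The main obstacle is the bookkeeping in applying the Corollary. One must check that the ordering hypothesis $d_1\leq\dots\leq d_m$ holds, which it does since $d_1\geq 2>1$. One must also check that the index range $m-k+s+2,\dots,m$ has exactly $k-1-s$ elements, so that the total number of factors is $k-1$. Finally, one must confirm that a regular sequence of $r$ independent linear forms is allowed as the initial segment; this holds since $r\leq k-1$. If one prefers to avoid this, an alternative is to restrict to $L\cong\mathbb{P}^{k-1-r}$. There $X\cap L$ is the zero set of the restrictions of $f_1,\dots,f_{k-1}$, which are $k-1$ forms in $k-r$ variables. The first part of Corollary~\ref{c:genBezout}, or its $s=0$ analogue, then gives the bound $d_{r+1}\cdots d_{k-1}$ from the largest $k-1-r$ degrees. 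Some care is needed in that variant with restrictions that vanish identically, or that drop below positive degree; these can be discarded.
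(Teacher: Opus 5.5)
Your proposal is correct and is essentially the paper's own argument: write $\mathrm{d}_r(\Cc)=n-\mathrm{hyp}_r(\Cc)$, then apply the second part of Corollary~\ref{c:genBezout} with $s=r$ to the system $\ell_1,\dots,\ell_r,f_1,\dots,f_{k-1}$. Your explicit bookkeeping (position $2r+1$ of the degree list is $d_{r+1}$, the index range has $k-1-r$ entries, and $s=r\le k-1$), together with your handling of the empty case, fills in details the paper leaves implicit.
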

\begin{proof}
    Since the set of evaluation points projectively forms a reduced complete intersection, we have $n=d_1d_2\cdots d_{k-1}$. Moreover, we know that  
    \begin{equation*}
        \begin{split}
            \mathrm{d}_r(\Cc)=n-\max\{\lvert X\cap V(h_1,\dots,h_r)\rvert:&h_1,\dots,h_r\in \mathbb{F}_q[x_1, \dots, x_k]_1\\&\text{ and }\dim(V(h_1,\dots,h_r))=k-1-r\}
        \end{split}
    \end{equation*}
    This follows from the fact that the $r$-th generalized weight is the minimum cardinality of the support of a subcode of dimension $r$. This is equivalent to considering the maximum number of columns that lie on $r$ linearly independent hyperplanes simultaneously. Finally, note that $r$ linearly independent hyperplanes form a regular sequence. So, to conclude, it suffices to apply Corollary~\ref{c:genBezout} to the system $h_1,\dots,h_r,f_1,\dots,f_{k-1}$. 
\end{proof}

\begin{remark}
    Theorem~\ref{thm:mindisthigherform} cannot be extended to generalized Hamming weights with the same approach of Theorem~\ref{thm:generalizedweightshyper} since the linear independence of homogeneous forms of the same degree does not in general imply the regularity of the sequence.
\end{remark}

\section{A more general conjecture}
Let $A_i\subseteq\mathbb{F}_q$ be a set of cardinality $d_i$, for $1\leq i \leq k-1$, and consider the projective Cartesian set 
\[ 
X_{\text{Car}} =\{[1:a_1:\cdots:a_{k-1}]\in\mathbb{P}^{k-1}(\mathbb{F}_q): a_i\in A_i\}. 
\] 
Thus, $X_{\mathrm{Car}}$ is a reduced complete intersection of degrees $d_1,\dots,d_{k-1}$. The generalized Hamming weights $\mathrm{d}_r\bigl(\mathcal{C}_d(X_{\mathrm{Car}}))$ of $\mathcal{C}_d(X_{\mathrm{Car}})$ are known \cite{beelenGHWcartesian}, and we conjecture that projective Cartesian codes have the smallest generalized Hamming weights among evaluation codes supported on reduced complete intersections with the same degrees. 

\begin{conjecture}\label{conj:ghws}
Let $X\subseteq\mathbb{P}^{k-1}(\mathbb{F}_q)$ be a zero-dimensional reduced complete intersection of degrees $2\leq d_1\leq\cdots\leq d_{k-1}$, and let $X_{\mathrm{Car}}$ be as above. Then, for every $d\geq 1$ and every $1\leq r\leq\dim\mathcal{C}_d(X)$, 
\[ 
\mathrm{d}_r\bigl(\mathcal{C}_d(X)\bigr) \geq \mathrm{d}_r\bigl(\mathcal{C}_d(X_{\mathrm{Car}})\bigr). 
\] 
\end{conjecture} 

For the minimum distance, this conjecture holds when $X\subseteq\mathbb{P}^2$. Indeed, for $d\geq d_1-1$, the bound of Gold, Little, and Schenck \cite[Theorem 3.2]{gold2005cayley} agrees with the minimum distance of the corresponding Cartesian code. For $d<d_1$, the conjecture holds for any $k$ by Theorem \ref{thm:mindisthigherform}, and for $d=1$, the conjecture is true by Theorem \ref{thm:generalizedweightshyper}. Finally, for $r=1$, Conjecture \ref{conj:ghws} is equivalent to CB12 in \cite{eisenbud_green_harris} for reduced complete intersections.

\section*{Disclosure of AI usage}
We acknowledge the use of ChatGPT Pro 5.5 (OpenAI) in the early stages of this work. The AI suggested an initial proof strategy for the conjecture involving a weaker version of Proposition 2.1 and a projection argument. The final proof was independently simplified and extended to generalized Hamming weights and forms of larger degree.
\bibliographystyle{abbrv}
\bibliography{biblio.bib}
\end{document}